\newtheorem{teorema}{Theorem}[section]
\newtheorem{ejemplo}[teorema]{Example}
\newtheorem{teo}[teorema]{Theorem}
\newtheorem{prop}[teorema]{Proposition}
\newtheorem{remark}[teorema]{Remark}
\begin{document}
	
\title{On the Zariski invariant of plane branches}
\author{{\sc Marcelo Escudeiro Hernandes and} \\ {\sc Mauro Fernando Hern\'andez Iglesias}}\thanks{The first-named author was partially supported by CNPq-Brazil and the second-named author were partially supported by  the Direcci\'on de Fomento de la Investigaci\'on at the PUCP through grant DFI-2023-PI0983.}
	
\date{}

\dedicatory{Dedicated to the memory of Professor Arkadiusz P\l oski }
 
\begin{abstract}
We show how to obtain the Zariski invariant of a plane branch employing the contact order or the intersection multiplicity with elements in a particular family of curves
and we present some consequences of this result.
\end{abstract}
	
  \maketitle

 \noindent {\it Keywords: Plane branches, Zariski invariant, intersection multiplicity}. \\
		\noindent {\it 2020 AMS Classification:} 14H20 (primary); 14H50, 14B05 (secondary).\\

\markboth{M. E. Hernandes and M. F. Hern\'andez Iglesias}{On the Zariski invariant of plane branches}

\section{Introduction}

Let $C_f:\{f=0\}$ be an irreducible singular plane curve (simply a plane branch) defined by an irreducible convergent power series $f\in \mathbb C\{x,y\}$. We denote by $\text{mult}(h)$ the multiplicity of $h\in\mathbb{C}\{x,y\}\setminus\{0\}$, that is the smallest  $s\in\mathbb{N}$ such that $h\in\mathcal{M}^s\setminus\mathcal{M}^{s+1}$, where $\mathcal{M}$ stands for the maximal ideal of $\mathbb{C}\{x,y\}$. Up to change of coordinates, we may assume that $\{x=0\}$ is transversal to $C_f$ and $f\in\mathbb{C}\{x\}[y]$ is a Weierstrass polynomial that is $f=y^n+\sum_{i=1}^{n}c_i(x)y^{n-i}$ where $c_i(x)\in\mathbb{C}\{x\}$ with $\text{mult}(c_i(x))> i$ and $n=\text{mult}(f)=\deg_y(f)$ is the multiplicity of $f$. We denote $\text{mult}(C_f):=\text{mult}(f)$. 

By the Newton-Puiseux theorem  $f$ admits a root given by $$\alpha(x):=\sum_{i> n}a_ix^{\frac{i}{n}}\in\mathbb{C}\{x^{\frac{1}{n}}\}.$$ In addition, the zero set of $f$ is
$\left\{\alpha_j(x):=\sum_{i> n}a_i\epsilon_j^ix^{\frac{i}{n}}:\ \epsilon_j\in \mathbb{U}_n\right\}$,
where $\mathbb{U}_n$ is the multiplicative group of the complex $n$th roots of the unity. In this way, we get
$$
f(x,y)=\prod_{j=1}^{n}\left ( y-\alpha_j(x)\right ).$$

Setting $t=x^{\frac{1}{n}}$ we have that $\alpha(t^n)=\sum_{i> n}a_it^{i}\in\mathbb{C}\{t\}$. The pair
\begin{equation}\label{param}
 (t^n,\sum_{i> n}a_it^i )  
\end{equation}
is called a \emph{Puiseux parametrization} of $C_f$. Notice that $f(t^n,\sum_{i> n}a_it^i)=0$. In addition, by the Newton-Puiseux theorem, we have that (\ref{param}) is a primitive parametrization,
that is $n$ and the elements in the set $\{ i:\ a_i\neq 0\}$ do not admit a non-trivial common divisor. 

Given a Puiseux parametrization as (\ref{param}) we define two sequences of integers:
\begin{equation}\label{sequences}
\begin{array}{ll}
 \beta_0:=n  & e_0:=n  \\
\beta_j:=\min\{i:\ a_i\neq 0\ \mbox{and}\ i\not\in e_{j-1}\mathbb{N}\}  & e_j:=\gcd(e_{j-1},\beta_j) 
\end{array}    
\end{equation}
for $j>0$.

In what follows we denote $m:=\beta_1$.
Since the parametrization is primitive there exists an integer $g\geq 1$ such that $e_g=1$ and the sequences (\ref{sequences}) are finite.

The sequence $(\beta_i)_{i=0}^{g}$ is called \emph{characteristic sequence} of $C_f$ and it determines the topological type of the curve $C_f$ (see \cite[pag 465]{zariski-equising}). The set of all irreducible plane curves with the same characteristic sequence $(\beta_i)_{i=0}^{g}$ or equivalently, that share the same topological type is denoted by $K(n,m,\beta_2,\ldots ,\beta_g)$.

Given $C_f\in K(n,m,\beta_2,\ldots, \beta_g)$ we consider the set 
	\[
	\Gamma_f:=\{\textup{I}(f,h)\;:\; f \,\hbox{\rm does not divide }h\ \mbox{in}\ \mathbb{C}\{x,y\}\},
	\]
where $\textup{I} (f,h):=\dim_{\mathbb{C}}\frac{\mathbb{C}\{x,y\}}{\langle f,h\rangle}$ is the \emph{intersection multiplicity} of $C_f$ and $C_h$ at the origin. We also denote $\textup{I}(f,h)$ by $\textup{I}(C_f,C_h)$.
It follows by the properties of codimension of ideals that $\Gamma_f$ is an additive semigroup of $\mathbb{N}$ called the \emph{values semigroup} of $C_f$. Moreover, $\Gamma_f$ admits a conductor  $$\mu_f=\min\{\gamma\in\Gamma_f:\ \gamma-1\not\in\Gamma_f\ \mbox{and}\ \gamma+k\in\Gamma_f\ \mbox{for any}\ k\in\mathbb{N}\}$$
and it coincides with the Milnor number of $C_f$, that is $\mu_f=\dim_{\mathbb{C}}\frac{\mathbb{C}\{x,y\}}{\langle f_x,f_y\rangle}$.
 
If $C_f\in K(n,m,\beta_2,\ldots, \beta_g)$ then the semigroup $\Gamma_f$ is finitely generated by $g+1$ natural numbers $v_0<v_1<\ldots <v_g$ and  there is a relationship between the sequences $(\beta_i)_{i=0}^{g}$ and $(v_i)_{i=0}^g$ as follows (see \cite[Theorem 3.9]{Zariski-libro} for instance):
\begin{equation}\label{exp-gen}	\begin{array}{l}
v_0=\beta_0=n,\ \  v_1=\beta_1=m,\\
v_j=n_{j-1}v_{j-1}+\beta_j-\beta_{j-1}\ \mbox{for}\ 2\leq j\leq g, \mbox{where}\ \ n_{j-1}:=\frac{e_{j-2}}{e_{j-1}}.
\end{array}
\end{equation}

In what follows we write
$\Gamma_f=\langle v_0, v_1,\ldots ,v_g\rangle:=\mathbb N v_0+\mathbb Nv_1+\cdots +\mathbb N v_g$.

In addition, according to \cite[Proposition 9.15]{rosales}, the conductor of $\Gamma_f$ can be expressed by $\mu_f=\sum_{i=1}^{g}(n_i-1)v_i-(v_0-1)$.

Let $C_f\in K(n,m,\beta_2,\ldots ,\beta_g)$ be a plane branch with Puiseux parametrization $(t^n,\sum_{i>n}a_it^i)$.
Zariski (\cite[pages 785-786]{zariski-torsion}) proved that if $a_j\neq 0$, $n<j\neq m$ and $j+n\in\Gamma_f$ then there exists a change of coordinates such that $C_f$ is analytically equivalent to a plane branch with Puiseux parametrization
$$\left ( t^n, \sum_{n<i<j}a_it^i+\sum_{i>j}a'_it^i\right ).$$
Moreover, he showed that $C_f$ is analytically equivalent to a plane branch with Puiseux parametrization $(t^n,t^m)$ or

\begin{equation}\label{zariski}
\left (t^n,t^m+b_{\lambda_f}t^{\lambda_f}+\sum_{i> \lambda_f}b_it^i\right )\ \  \mbox{with}\  b_{\lambda_f}\neq 0\ \mbox{and}\ \lambda_f+n\not\in\Gamma_f.
\end{equation} The integer $\lambda_f$
is an analytical invariant (see page 785 in \cite{zariski-torsion}) called the \emph{Zariski invariant} of $C_f$. If $C_f$ is analytically equivalent to $(t^n,t^m)$ we put $\lambda_f=\infty$.  

In general, it is not immediate to identify the Zariski invariant directly by any Puiseux parametrization as we illustrate in the following example.

\begin{ejemplo}\label{example} Let us consider $C_f\in K(4,7)$ given by the Puiseux parame\-trization $$\varphi(t):=\left (t^4,t^7+t^{10}+t^{12}+bt^{13}\right ).$$
	
Notice that $10+4,12+4\in\Gamma_f$ and $13+4\not\in\Gamma_f$. But we can not conclude that $\lambda_f=13$ for any $b\neq 0$. 

In fact, taking the change of coordinates 
\[\sigma(x,y)=\left (x+\frac{4}{7}y,y-x^3\right )\]
and parameter \[\rho(t)=t-\frac{1}{7}t^4-\frac{3}{98}t^7-\frac{1}{7}t^9\]
we get 
\[
\psi(t):=\sigma\circ\varphi\circ\rho(t)=\left (t^4+\left ( \frac{4}{7}b-\frac{32}{49}\right )t^{13}+A(t), t^7+\left (b-\frac{17}{14}\right )t^{13}+B(t) \right ),\]
where $A(t), B(t)\in\mathbb{C}\{t\}$ have order greather then $13$.

Now considering the change of parameter $$t_1:=t\cdot \left ( 1+\left ( \frac{4}{7}b-\frac{32}{49}\right )t^{9}+\frac{A(t)}{t^4}\right )^{\frac{1}{4}}$$
we have that $C_f$ is analytically equivalent to the plane branch with parametrization 
\begin{equation}\label{pre}
\left (t_1^4, t_1^7+\left (b-\frac{17}{14}\right )t_1^{13}+S(t_1)\right )
\end{equation}
where $S(t_1)\in\mathbb{C}\{t_1\}$ has order greather then $13$.

Since any integer $z>13$ is such that $z+4\in\Gamma_f$, by a change of coordinates and parameter (see \cite[pag. 784]{zariski-torsion}), any term in (\ref{pre}) with order greater than $13$ can be eliminated and consequently, $C_f$ is analytically equivalent to a plane branch with Puiseux parametrization
\[
\left (t^4, t^7+\left (b-\frac{17}{14}\right )t^{13}\right ).
\]
In this way,  we get $\lambda_f=13$ if and only if $b\neq\frac{17}{14}$, that is $C_f$ is analytically equivalent to the plane branch defined by $y^4-x^7=0$ if and only if $b=\frac{17}{14}$.
\end{ejemplo}

In this paper we characterize the Zariski invariant by means of the contact order and the intersection multiplicity with a particular family of plane branches and we present some consequences of this result.
	
\section{The Zariski invariant, contact and intersection multiplicity of plane branches}	

As before, we consider $C_f\in K(n,m,\beta_2,\ldots, \beta_g)$ with a Puiseux parame\-trization $(t^n,\sum_{i> n}a_it^i)$, where $\Gamma_f=\langle n,m,v_2,\ldots ,v_g\rangle$ is its values semigroup and
$\lambda_f$ is the Zariski invariant of $C_f$ as introduced in (\ref{zariski}).  

\begin{remark}\label{remark-std-expression}
If $\Gamma_f=\langle n,m,v_2,\ldots ,v_g\rangle$ and $n_i=\frac{e_{i-1}}{e_i}$ for $i=1,\ldots ,g$ as (\ref{exp-gen}) then any $z\in\mathbb{Z}$ can be uniquely represented (see \cite[Lemma 9.14]{rosales}) as	\begin{equation}\label{standard}
		z=\sum_{i=0}^{g}s_iv_i\ \ \mbox{with}\ \  0\leq s_i< n_i\ \ \mbox{for}\ \ 1\leq i\leq g\ \ \mbox{and}\ \ s_0\in\mathbb{Z}.
	\end{equation}
In particular, an integer $z=\sum_{i=0}^{g}s_iv_i$ as in \eqref{standard} belongs to $\Gamma_f$ if and only if $s_0\geq 0$.    
\end{remark}
	
Let $C_f\in K(n,m,\beta_2,\ldots, \beta_g)$ be a plane branch with $g\geq 2$ and Puiseux parametrization $(t^n,\sum_{i> n}a_it^i)$. By (\ref{exp-gen}), we get $\beta_2=v_2+\beta_1-n_1v_1=v_2+v_1-m_1v_0$ with $m_1:=\frac{m}{e_1}>2$. In particular, $\beta_2+v_0=v_2+v_1-(m_1-1)v_0$, so $\beta_2+v_0$ is an integer as in \eqref{standard}, with $-(m_1-1)=s_0<0$, hence $\beta_2+v_0\not\in \Gamma_f$. Since, by (\ref{sequences}), we get $a_{\beta_2}\neq 0$ and for $g\geq 2$ we have $$m<\lambda_f\leq\beta_2.$$

Let us recall the notion of contact order between two branches.

Let $C_f$ and $C_h$ be two plane branches defined by Weierstrass polynomials $f, h\in\mathbb{C}\{x\}[y]$ with $n=\text{mult}(f)=\deg_y(f)$ and $n'=\text{mult}(h)=\deg_y(h)$. If $\{\alpha_i(x):\ 1\leq i\leq n\}$ and $\{\delta_j(x):\ 1\leq j\leq n'\}$ denote the zero set of $f$ and $h$ respectively, then the \emph{contact order} of $C_f$ with $C_h$ is defined as
\begin{equation}\label{contact}
\text{cont}(C_f,C_h)=
		\max_{1\leq i \leq n \atop	1\leq j \leq n'}
		\left\{\text{mult}\left(\alpha_{i}(x)-\delta_{j}(x)\right)\right\}.
	\end{equation}

The following proposition relates the contact order and the intersection multiplicity of two branches:

\begin{prop}(\cite[Proposition 2.4]{merle})\label{cont-I}
		Let $C_f\in K(n,m,\beta_2,\ldots,\beta_g)$, $\Gamma_f=\langle n,m,v_2,\ldots,v_g\rangle$ its values semigroup and $C_h$ be any plane branch. The following statements are equivalent:\vspace{0.2cm}
		\begin{enumerate}
			\item[i)] $\text{cont}(C_f,C_h)=\theta$, with $\theta\in \mathbb Q$ and $\frac{\beta_q}{n}\leq \theta <\frac{\beta_{q+1}}{n}$ for some $1\leq q\leq g$
			
			\hspace{6cm} (by convention $\beta_{g+1}=\infty)$.\vspace{0.2cm}
			\item[ii)] $\dfrac{\textup{I} (C_f,C_h)}{\text{mult}(C_h)}=\dfrac{n_qv_q+n\theta-\beta_q}{n_0n_1\cdots n_q}$\hspace{1.8cm} (where  $n_0=1$).
		\end{enumerate}
	\end{prop}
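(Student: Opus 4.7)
The plan is to reduce $\textup{I}(C_f,C_h)$ to a sum, over the conjugate Puiseux roots $\alpha_i(x)$ of $f$, of the contact orders of $\alpha_i$ with a single Puiseux root of $h$, and then to evaluate this sum using the combinatorial structure imposed by the characteristic sequence.

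First, fix a Puiseux parametrization $(T^{n'},\delta(T))$ of $C_h$ with $n'=\text{mult}(C_h)$, and let $\delta_1(x):=\delta(x^{1/n'})$ be the corresponding Puiseux root. The familiar identity $\textup{I}(f,h)=\text{mult}_T f(T^{n'},\delta(T))=\sum_{i=1}^n\text{mult}_T(\delta(T)-\alpha_i(T^{n'}))$ combined with the substitution $x=T^{n'}$ gives $\text{mult}_T(\delta(T)-\alpha_i(T^{n'}))=n'\cdot\text{cont}(\alpha_i,\delta_1)$, so
\[
\frac{\textup{I}(C_f,C_h)}{\text{mult}(C_h)}=\sum_{i=1}^{n}\text{cont}(\alpha_i,\delta_1).
\]
Since the left-hand side does not depend on the chosen parametrization of $C_h$, we may replace $\delta_1$ by any of its conjugates, and hence assume that $\delta_1$ achieves the maximum in (\ref{contact}) with some $\alpha_{i_0}$; relabelling, $\text{cont}(\alpha_1,\delta_1)=\theta$.

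Next, I control each term $\text{cont}(\alpha_i,\delta_1)$ by the ultrametric-type inequality $\text{cont}(\alpha_i,\delta_1)\geq\min\{\text{cont}(\alpha_i,\alpha_1),\theta\}$ (with equality when the two arguments differ), together with the maximality $\text{cont}(\alpha_i,\delta_1)\leq\theta$. A short case analysis shows $\text{cont}(\alpha_i,\delta_1)=\theta$ whenever $\text{cont}(\alpha_1,\alpha_i)\geq\theta$, and $\text{cont}(\alpha_i,\delta_1)=\text{cont}(\alpha_1,\alpha_i)$ otherwise. Writing $\alpha_i(x)=\sum_k a_k\epsilon_i^k x^{k/n}$ and using that $\epsilon_i^j=1$ for every $j<\beta_{k+1}$ with $a_j\neq 0$ if and only if $\epsilon_i\in\mathbb{U}_{e_k}$, one finds $\text{cont}(\alpha_1,\alpha_i)=\beta_k/n$ precisely when $\epsilon_i\in\mathbb{U}_{e_{k-1}}\setminus\mathbb{U}_{e_k}$, giving $e_{k-1}-e_k$ conjugates at each characteristic level. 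Assembling the counts under the assumption $\beta_q/n\leq\theta<\beta_{q+1}/n$ yields
\[
\sum_{i=1}^n\text{cont}(\alpha_i,\delta_1)=e_q\,\theta+\frac{1}{n}\sum_{k=1}^{q}(e_{k-1}-e_k)\,\beta_k.
\]

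The proof closes with an algebraic identity. Iterating the recursion $v_j=n_{j-1}v_{j-1}+\beta_j-\beta_{j-1}$ of (\ref{exp-gen}) telescopes to $e_{q-1}v_q=e_{q-1}\beta_q+\sum_{k=1}^{q-1}(e_{k-1}-e_k)\beta_k$, from which a direct manipulation gives $\frac{1}{e_q}\sum_{k=1}^{q}(e_{k-1}-e_k)\beta_k=n_qv_q-\beta_q$. Multiplying the preceding display by $n_0n_1\cdots n_q=n/e_q$ converts it into $(n_qv_q+n\theta-\beta_q)/(n_0n_1\cdots n_q)$, establishing (i)$\Rightarrow$(ii); the converse is immediate since the right-hand side of (ii) is a strictly increasing affine function of $\theta$ on $[\beta_q/n,\beta_{q+1}/n)$. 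The main technical subtlety lies in the ultrametric analysis at the boundary $\theta=\beta_q/n$, where conjugates with $\text{cont}(\alpha_1,\alpha_i)=\beta_q/n$ could a priori contribute $\theta$ rather than $\beta_q/n$; however these two values coincide, so the unified counting formula above holds without a separate case.
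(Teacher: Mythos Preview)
The paper does not supply its own proof of this proposition: it is quoted verbatim from Merle \cite{merle} and used as a black box. Your argument is the classical one (essentially Merle's): express $\textup{I}(C_f,C_h)/\text{mult}(C_h)$ as $\sum_i \text{cont}(\alpha_i,\delta_1)$, use the ultrametric property together with the maximality of $\theta$ to reduce each term to $\min\{\text{cont}(\alpha_1,\alpha_i),\theta\}$, count conjugates via the stratification $\mathbb{U}_{e_{k-1}}\setminus\mathbb{U}_{e_k}$, and finish with the telescoping identity $\sum_{k=1}^{q}(e_{k-1}-e_k)\beta_k=e_q(n_qv_q-\beta_q)$. Each step checks out, including the boundary case $\theta=\beta_q/n$ you flag at the end, and the converse via strict monotonicity in $\theta$ is legitimate since the intervals $[\beta_q/n,\beta_{q+1}/n)$ partition $[\beta_1/n,\infty)$ and the formula is continuous across the endpoints (at $\theta=\beta_{q+1}/n$ the value from the $q$th formula is $(n_qv_q+\beta_{q+1}-\beta_q)/(n/e_q)=e_qv_{q+1}/n$, which agrees with the $(q+1)$st formula). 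So there is nothing to compare against in the paper, and your proof is correct and self-contained.
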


Since the intersection multiplicity and the multiplicity of plane branches are invariants by analytical change of coordinates it follows that the contact order between two branches is also an invariant by analytical equivalence.
 
\begin{remark}\label{triangular}
A direct application of the contact formula (\ref{contact}) shows that for three plane branches $C_1, C_2$ and $C_3$ we have that at least $2$ of the three values
$$\text{cont}(C_1,C_2),\  \text{cont}(C_1,C_3),\ \text{cont}(C_2,C_3)$$ are equal and the third one is not smaller than the other two. In addition, according to P\l oski \cite[Th\'eor\`em 1.2]{Ploski}, we get that at least $2$ of the three values
$$\frac{\textup{I}(C_1,C_2)}{\text{mult}(C_1)\text{mult}(C_2)},\  \frac{\textup{I}(C_1,C_3)}{\text{mult}(C_1)\text{mult}(C_3)},\ \frac{\textup{I}(C_2,C_3)}{\text{mult}(C_2)\text{mult}(C_3)}$$  are equal and the third one is not smaller than the other two. This property is
known in the literature as \emph{triangular inequality}.
\end{remark}

Notice that the integers $\beta_0, \beta_1,\ldots ,\beta_g$ and $v_0, v_1,\ldots ,v_g$ associated to a plane branch $C_f\in K(\beta_0,\beta_1,\ldots ,\beta_g)$ are geometrically characterized by
\begin{equation}\label{charct-cont}
\begin{array}{l}
	\beta_0=\min\{\text{cont}(C_f,C)\ :\ C\ \mbox{is a regular curve}\} \vspace{0.2cm}\\
	\beta_1=\max\{\text{cont}(C_f,C)\ :\ C\ \mbox{is a regular curve}\}
	\vspace{0.2cm}\\
	\beta_i=\max\left \{\text{cont}(C_f,C)\ :\ C\in K(\frac{\beta_0}{e_{i-1}},\ldots ,\frac{\beta_{i-1}}{e_{i-1}})\right \}\ \ \mbox{for}\ 2\leq i\leq g
\end{array}
\end{equation}
and
\begin{equation}\label{vi-inter}
\begin{array}{l}
v_0=\min\{\textup{I}(C_f,C)\ :\ C\ \mbox{is a regular curve}\} \vspace{0.2cm}\\
v_1=\max\{\textup{I}(C_f,C)\ :\  C\ \mbox{is a regular curve}\}
\vspace{0.2cm}\\
v_i=\max\left \{\textup{I}(C_f,C)\ :\ C\in K(\frac{\beta_0}{e_{i-1}},\ldots ,\frac{\beta_{i-1}}{e_{i-1}})\right \}\ \ \mbox{for}\ 2\leq i\leq g.
\end{array}
\end{equation}

In what follows, similar to (\ref{charct-cont}) and (\ref{vi-inter}), we present a geometric interpretation for the Zariski invariant of a plane branch using the contact order or the intersection multiplicity with elements in a family $\mathcal{B}$ of curves in $K(n_1,m_1)$.

\begin{teo}\label{geo}
Let $C_f\in K(n,m,\beta_2, \ldots, \beta_g)$ be a plane branch defined by a Weierstrass polynomial $f\in\mathbb{C}\{x\}[y]$.  Then
		\[\lambda_f =n\cdot\max_{C\in \mathcal B}\{\text{cont}(C_f,C)\}=
		\max_{C\in \mathcal B} \{\textup{I}(C_f,C)\}-(n_1-1)m,\]
		where $\mathcal B\subset K(n_1,m_1)$ is the set of branches which are analytically equivalent to  $y^{n_1}-x^{m_1}=0$ with $n_1=\frac{n}{e_1}$ and $m_1=\frac{m}{e_1}$. 
\end{teo}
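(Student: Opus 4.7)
My plan is to reduce $C_f$ to the Zariski form (\ref{zariski}), establish the first equality $\lambda_f=n\cdot\max_{C\in\mathcal{B}}\text{cont}(C_f,C)$ by matching upper and lower bounds, and then deduce the second equality from Proposition \ref{cont-I}. Since $\text{cont}$, $\textup{I}$ and $\text{mult}$ are analytical invariants, I may assume the roots of $f$ have the form
\[
\alpha_j(x)=\epsilon_j^m\,x^{m/n}+b_{\lambda_f}\epsilon_j^{\lambda_f}\,x^{\lambda_f/n}+\sum_{i>\lambda_f}b_i\epsilon_j^i\,x^{i/n},\qquad \epsilon_j\in\mathbb{U}_n.
\]
The lower bound comes by testing $C_0=\{y^{n_1}-x^{m_1}=0\}\in\mathcal{B}$, whose roots are $\delta_k(x)=\epsilon^{m_1}x^{m/n}$: since $\gcd(n_1,m_1)=1$ makes $\epsilon\mapsto\epsilon^{m_1}$ a bijection of $\mathbb{U}_{n_1}$, I can pick $\epsilon$ with $\epsilon^{m_1}=\epsilon_j^m$, so the leading terms cancel and $\alpha_j-\delta_k$ starts at $b_{\lambda_f}\epsilon_j^{\lambda_f}x^{\lambda_f/n}$, giving $\text{cont}(C_f,C_0)=\lambda_f/n$.

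The heart of the argument is the upper bound, which I prove by contradiction. If $\text{mult}(\alpha_j-\delta_k)>\lambda_f/n$ for some $C\in\mathcal{B}$ and some roots $\alpha_j,\delta_k$, then the truncation of $\delta_k$ up to order $\lambda_f/n$ coincides with that of $\alpha_j$. Since $\delta_k\in\mathbb{C}\{x^{1/n_1}\}$ has exponents in $\frac{e_1}{n}\mathbb{N}$, matching the surviving term $b_{\lambda_f}\epsilon_j^{\lambda_f}x^{\lambda_f/n}$ forces $e_1\mid\lambda_f$; set $\ell=\lambda_f/e_1$. Passing to a Puiseux parametrization $(s^{n_1},\delta(s))$ of $C$ and reading off coefficients gives $d_{m_1}\neq 0$, $d_l=0$ for $m_1<l<\ell$ and $d_\ell\neq 0$; rescaling $s$ to normalise $d_{m_1}=1$ brings the parametrisation to $(s^{n_1},s^{m_1}+d\,s^\ell+\cdots)$ with $d\neq 0$. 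The arithmetic key is that $\lambda_f+n\notin\Gamma_f\supseteq\langle n,m\rangle$ divides through by $e_1$ to $\ell+n_1\notin\langle n_1,m_1\rangle$, so the term $d\,s^\ell$ is non-removable by Zariski's procedure; moreover every admissible elimination of a higher-order term $s^l$ $(l>\ell)$ introduces only corrections of order $\geq l+\ell-m_1>\ell$, so this coefficient persists through any Zariski normalisation of $C$. Hence the Zariski invariant of $C$ equals the finite number $\ell$, contradicting $C\in\mathcal{B}$.

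Combining the two bounds yields the first equality. For the second, Proposition \ref{cont-I} applied at the optimal $C$ with $\theta=\lambda_f/n\in[\beta_1/n,\beta_2/n]$ (so $q=1$ if $\lambda_f<\beta_2$ and $q=2$ if $\lambda_f=\beta_2$) simplifies via (\ref{exp-gen}), using $v_2=(n_1-1)m+\beta_2$, to $\textup{I}(C_f,C)=(n_1-1)m+\lambda_f$; since this value is a monotone increasing function of $\theta$ for fixed $\text{mult}(C)=n_1$, the maximiser of contact is also a maximiser of intersection multiplicity, which proves the second equality. The main obstacle I anticipate is the persistence-of-coefficient verification in the contradiction step: it requires a careful bookkeeping of Zariski's elementary coordinate changes $y\mapsto y-cx^{a-1}y^b$ (with $an_1+bm_1=l+n_1$) used to eliminate terms $s^l$ with $l>\ell$, to confirm that their corrections to $\delta(s)$ all land at orders strictly above $\ell$ and therefore never cancel the critical coefficient $d$ at $s^\ell$.
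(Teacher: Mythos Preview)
Your argument is correct and follows essentially the same strategy as the paper: reduce $C_f$ to the Zariski normal form, test the standard curve $y^{n_1}-x^{m_1}$ for the lower bound, and derive a contradiction for the upper bound by showing that any $C\in\mathcal B$ with higher contact would have a finite Zariski invariant. Two remarks are worth making.

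First, your anticipated ``main obstacle'' is not an obstacle at all. Once you have shown that $C$ admits a Puiseux parametrization $(s^{n_1},s^{m_1}+d\,s^{\ell}+\cdots)$ with $d\neq 0$ and $\ell+n_1\notin\langle n_1,m_1\rangle=\Gamma_C$, this parametrization is \emph{already} in the Zariski form (\ref{zariski}), so by definition $\lambda_C=\ell$. No persistence-through-normalization bookkeeping is needed; the Zariski invariant is characterized by the existence of \emph{some} parametrization of this shape, which you have exhibited. The paper phrases the same contradiction contrapositively: since $C\in\mathcal B$, the exponent $k=\ell$ cannot be its Zariski invariant, forcing $k+n_1\in\langle n_1,m_1\rangle$ and hence $\lambda_f+n\in\langle n,m\rangle\subset\Gamma_f$, which is absurd.

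Second, the paper streamlines your root-comparison step via the triangular inequality (Remark \ref{triangular}): from $\text{cont}(C_f,C)>\tfrac{\lambda_f}{n}=\text{cont}(C_f,C_h)$ it follows immediately that $\text{cont}(C_h,C)=\tfrac{\lambda_f}{n}$, which directly reads off the shape of the Puiseux expansion of $C$ relative to the standard curve $C_h$. This replaces your explicit coefficient matching. The paper also makes two small points explicit that you leave implicit: the trivial case $\lambda_f=\infty$ (then $g=1$, $C_f\in\mathcal B$, and both sides equal $\infty$), and the observation that $\Phi(\mathcal B)=\mathcal B$ for any change of coordinates $\Phi$, which justifies computing the maximum over $\mathcal B$ after passing to the normal form.
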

\begin{proof}
If the Zariski invariant of $C_f$ is $\lambda_f=\infty$ then, by \cite[pag. 784]{zariski-torsion}, we get $e_1=gcd(n,m)=1$, that is, $n=n_1$, $m=m_1$ and $C_f$ is analytically equivalent to $y^{n_1}-x^{m_1}=0$, consequently $C_f\in \mathcal B$ and the theorem follows since $\textup{I}(C_f,C_f)=\infty=\text{cont}(C_f,C_f)$. 

Let us suppose that $C_f$ has a finite Zariski invariant $\lambda_f$. In this way, there exists an analytic change of coordinates $\Phi$ such that $\Phi(C_f)$ has a Puiseux parametrization as (\ref{zariski}), that is
$$(t^n,t^m+b_{\lambda_f}t^{\lambda_f}+\sum_{i>\lambda_f}b_it^i)\ \ \mbox{with}\ b_{\lambda_f}\neq 0.$$
After Proposition \ref{cont-I} and in order to compute $\max_{C\in \mathcal B }\{\textup{I}(C_f,C)\}$, 
it is enough to determine $\max_{C\in \mathcal B}\{\text{cont}(C_f,C)\}$.

Notice that $C_h\in\mathcal{B}\subset K(n_1,m_1)$ defined by $h=y^{n_1}-x^{m_1}$ whose Puiseux parametrization is $(t^{n_1},t^{m_1})$ is such that $\text{cont}(\Phi(C_f),C_h)=\frac{\lambda_f}{n}$. 

In addition, given $C\in\mathcal{B}\subset K(n_1,m_1)$ if
$\text{cont}(\Phi(C_f),C)>\frac{\lambda_f}{n}=\text{cont}(\Phi(C_f),C_h)$ then by Remark \ref{triangular} we get $\text{cont}(C_h,C)=\frac{\lambda_f}{n}$. By definition of contact order (see \eqref{contact}), $C$ admits a Puiseux parametrization $(t^{n_1},t^{m_1}+c_{k}t^{k}+\sum_{i> k}c_it^i)$ with $\text{cont}(C_h,C)=\frac{\lambda_f}{n}=\frac{k}{n_1}$ for some $k> m_1$ and $c_k\neq 0$, that is, $k=\frac{\lambda_f}{e_1}$.
Since $C\in \mathcal{B}\subset K(n_1,m_1)$ we must have $k+n_1\in\langle n_1,m_1\rangle$ otherwise $k$ would be the Zariski invariant of $C$ that is a contradiction because $C\in\mathcal{B}$ . But in this way, $\lambda_f+n=e_1k+e_1n_1\in \langle n,m\rangle\subseteq \Gamma_f$ that is absurd, because $\lambda_f$ is the Zariski invariant of $C_f$. Hence, $\max_{C\in \mathcal B}\{\text{cont}(\Phi(C_f),C)\}=\frac{\lambda_f}{n}$.

Notice that for any change of coordinates $\Phi$ and for every $C\in\mathcal{B}$ we get $\Phi(C)\in\mathcal{B}$. In particular, $\Phi(\mathcal{B})\subseteq\mathcal{B}$ and $\Phi^{-1}(\mathcal{B})\subseteq\mathcal{B}$, consequently, $\Phi(\mathcal{B})=\mathcal{B}$. Since the contact order is invariant by change of coordinates we get
\[\begin{array}{cclcc}
\frac{\lambda_f}{n} & = & \max_{C\in \mathcal B}\{\text{cont}(\Phi(C_f),C)\} & & \vspace{0.2cm}\\ & = & \max_{\Phi^{-1}(C)\in \Phi^{-1}(\mathcal B)}\{\text{cont}(C_f,\Phi^{-1}(C))\} & = & \max_{C\in \mathcal B}\{\text{cont}(C_f,C)\}.\end{array}\]
This finishes the proof of the first equality of the statement.
		
Since $m<\lambda_f\leq\beta_2$ and $\text{mult}(C)=n_1$ for every $C\in\mathcal{B}$, again by Proposition \ref{cont-I}, we get
\[
\max_{C\in \mathcal B} \{\textup{I}(C_f,C)\}=\left\{\begin{array}{ll}
			(n_1-1)m+\lambda_f, & \hbox{\rm if }\lambda_f<\beta_2\\
			v_2=(n_1-1)m+\beta_2,& \hbox{\rm if }\lambda_f=\beta_2,
		\end{array}
		\right.
		\]
and the theorem follows.
\end{proof} 

\begin{ejemplo}\label{exemplo2}
If $C_f$ is the plane branch with Puiseux parametrization $(t^4,t^7+t^{10}+t^{12})$ then, according to Example \ref{example}, we get $\lambda_f=13$ and the plane branch $C_h$ with parametrization $(t^4,t^7+t^{10}+t^{12}+\frac{17}{14}t^{13})$ is an element in $\mathcal{B}\subset K(4,7)$ such that
$\text{cont}(C_f,C_h)=\frac{13}{4}=\frac{\lambda_f}{n}$. Consequently, the branch $C_h$ satisfies
$$\text{cont}(C_f,C_h)=\max_{C\in \mathcal B}\{\text{cont}(C_f,C)\}\ \ \mbox{and}\ \ \ \textup{I}(C_f,C_h)=
		\max_{C\in \mathcal B} \{\textup{I}(C_f,C)\}.$$
\end{ejemplo}

In \cite[pages 62-63]{Casas} Casas-Alvero studied a similar property of Theorem \ref{geo} using the theory of infinitely near points although no formula is presented in this context.
	
Let $C_f\in K(\beta_0,\beta_1,\ldots,\beta_g)$ and $C_h\in  K(\beta'_0,\beta'_1,\ldots,\beta'_{g'})$ be two plane branches with values semigroup $\Gamma_f=\langle v_0,v_1,\ldots ,v_g\rangle$ and $\Gamma_h=\langle v'_0,v'_1,\ldots ,v'_{g'}\rangle$ respectively.
Using the definition of contact order and a simple computation, it follows that if $\text{cont}(C_f,C_h)=\theta>\frac{\beta_k}{\beta_0}$ then  
\begin{equation}\label{contac-equality}
\frac{e_i}{e'_i}=\frac{\beta_i}{\beta'_i}=\frac{v_i}{v'_i}\ \ \ 
\text{for}\ \ \ 0\leq i\leq k
\end{equation}
where $e'_i=\gcd(\beta'_0,\ldots ,\beta'_i)=\gcd(v'_0,\ldots ,v'_i)$.

As an application of Theorem \ref{geo} we will see that the relationship (\ref{contac-equality}) is also valid for the Zariski invariant.			

\begin{prop}\label{proposition-mais}Let $f,h\in\mathbb{C}\{x\}[y]$ be two irreducible Weierstrass polynomials defining $C_f\in K(n,m,\beta_2,\ldots ,\beta_g)$ and $C_h\in K(n',m',\beta'_2,\ldots ,\beta'_{g'})$ with $\lambda$ and $\lambda'$ their respective Zariski invariants. 
\begin{itemize}
    \item[i)] If $\text{cont}(C_f,C_h) >\frac{\lambda}{n}$ then $\frac{\lambda}{n}=\frac{\lambda'}{n'}$.\vspace{0.1cm}
    \item[ii)] If $\ \textup{I}(C_f,C_h)>n'\cdot\left (\frac{(n_1-1)m+\lambda}{n_1}\right )$ then $\frac{\lambda}{n}=\frac{\lambda'}{n'}$.
\end{itemize}
\end{prop}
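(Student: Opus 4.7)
The overall strategy is to establish (i) directly from Theorem \ref{geo}, the triangular inequality of Remark \ref{triangular}, and the identity (\ref{contac-equality}); part (ii) will then be deduced from (i) via Proposition \ref{cont-I}. Throughout I will implicitly assume both Zariski invariants are finite, the other cases being vacuous or handled identically under the stated strict hypotheses.

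For (i), the first move is to pick a witness $C^{*}\in\mathcal{B}$ realising $\text{cont}(C_f,C^{*})=\lambda/n$, which exists by Theorem \ref{geo}. Applying the triangular inequality to the triple $(C_f,C_h,C^{*})$ with $\text{cont}(C_f,C_h)>\lambda/n=\text{cont}(C_f,C^{*})$, a short case check shows the only compatible configuration is $\text{cont}(C_h,C^{*})=\lambda/n$ with $\text{cont}(C_f,C_h)$ serving as the strictly largest of the three. Next, relation (\ref{contac-equality}) applied to $C^{*}\in K(n_1,m_1)$ and $C_h\in K(n',m',\beta'_2,\ldots,\beta'_{g'})$ with $\text{cont}(C^{*},C_h)=\lambda/n>m/n$ forces $n_1/n'=m_1/m'=1/e'_1$, that is, $n'/e'_1=n_1$ and $m'/e'_1=m_1$. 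Hence the family $\mathcal{B}'$ associated to $C_h$ by Theorem \ref{geo} coincides with $\mathcal{B}$, so $C^{*}\in\mathcal{B}'$ and Theorem \ref{geo} applied to $C_h$ gives $\lambda'/n'\geq\text{cont}(C_h,C^{*})=\lambda/n$.

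To close (i), I would argue by contradiction: assume $\lambda'/n'>\lambda/n$ and choose $C^{**}\in\mathcal{B}'=\mathcal{B}$ realising $\text{cont}(C_h,C^{**})=\lambda'/n'$. A direct case-by-case application of the triangular inequality on $(C_f,C_h,C^{**})$, using that both $\text{cont}(C_f,C_h)$ and $\text{cont}(C_h,C^{**})$ exceed $\lambda/n$, shows that in each of the three compatible configurations the value $\text{cont}(C_f,C^{**})$ is itself strictly greater than $\lambda/n$. This contradicts Theorem \ref{geo}, which bounds $\text{cont}(C_f,C)$ by $\lambda/n$ for every $C\in\mathcal{B}$, giving $\lambda'/n'=\lambda/n$.

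For (ii), I would set $\theta=\text{cont}(C_f,C_h)$ and view $\textup{I}(C_f,C_h)/n'$ as a function of $\theta$ through Proposition \ref{cont-I}. This function is piecewise linear on each interval $[\beta_q/n,\beta_{q+1}/n)$ with positive slope $n/(n_0\cdots n_q)$, and continuous at the break-points thanks to relation (\ref{exp-gen}); hence it is strictly increasing. Since $m<\lambda\leq\beta_2$, evaluating at $\theta=\lambda/n$ with $q=1$ gives $((n_1-1)m+\lambda)/n_1$, which is precisely the threshold appearing in the hypothesis. Thus the inequality on $\textup{I}$ is equivalent to $\theta>\lambda/n$, and (i) applies. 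The main obstacle I anticipate is in (i): ensuring that $\mathcal{B}'=\mathcal{B}$, so that Theorem \ref{geo} can be invoked symmetrically for both branches with the same family. Everything hinges on reading (\ref{contac-equality}) at level $k=1$, and on a careful enumeration of the triangular-inequality cases.
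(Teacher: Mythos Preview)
Your proposal is correct and uses the same ingredients as the paper (Theorem~\ref{geo}, the triangular inequality of Remark~\ref{triangular}, relation~(\ref{contac-equality}), and Proposition~\ref{cont-I}), but the execution differs in two places worth noting. For part~(i), after identifying $\mathcal{B}'=\mathcal{B}$ the paper observes in a single stroke that for \emph{every} $C\in\mathcal{B}$ one has $\text{cont}(C_f,C)\leq\lambda/n<\text{cont}(C_f,C_h)$, so the triangular inequality forces $\text{cont}(C_h,C)=\text{cont}(C_f,C)$; hence the two maxima over $\mathcal{B}$ coincide directly, without splitting into two inequalities and two witnesses $C^{*},C^{**}$ as you do. For part~(ii), the paper argues by contradiction through three explicit cases ($\theta<m/n$; $m/n\leq\theta\leq\lambda/n$ with $\lambda<\beta_2$; $\theta=\lambda/n=\beta_2/n$), whereas your monotonicity argument---that $\textup{I}(C_f,C_h)/n'$ is piecewise linear in $\theta$, continuous at the breakpoints by~(\ref{exp-gen}), hence strictly increasing---packages the same computation more cleanly and makes the equivalence $\textup{I}(C_f,C_h)>n'\cdot\frac{(n_1-1)m+\lambda}{n_1}\Longleftrightarrow\theta>\lambda/n$ transparent. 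Both routes are valid; the paper's step in~(i) is shorter, while your treatment of~(ii) is conceptually tidier.
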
	
\begin{proof}$ $
\begin{itemize}
 \item[i)] 
Since $\text{cont}(C_f,C_h) >\frac{\lambda}{n}>\frac{m}{n}$ it follows by (\ref{contac-equality}) that $$n_1=\frac{n}{e_1}=\frac{n'}{e'_1}\ \ \mbox{and}\ \ m_1=\frac{m}{e_1}=\frac{m'}{e'_1}.$$

By Theorem $\ref{geo}$ we have that 
$$\hspace{1.5cm}\frac{\lambda}{n}=\text{max}_{C\in \mathcal B}	\{\text{cont}(C_f,C)\}\ \ \mbox{and}\ \  \frac{\lambda'}{n'}=\text{max}_{C\in \mathcal B}	\{\text{cont}(C_h,C)\},$$ where $\mathcal B$ is the set of plane branches which are analytically equivalent to $y^{n_1}-x^{m_1}=0$.

Since $\text{cont}(C_f,C_h)>\frac{\lambda}{n}$, by Remark \ref{triangular} we get $$\text{cont}(C_f,C_h)>\text{cont}(C_f,C)=\text{cont}(C_h,C),$$ for any $C\in\mathcal{B}$. So,
$\text{max}_{C\in \mathcal B}	\{\text{cont}(C_f,C)\}=\text{max}_{C\in \mathcal B}	\{\text{cont}\big(C_h,C \big)\}$ and, consequently
$\frac{\lambda}{n}=\frac{\lambda'}{n'}$.

\item[ii)] Let us denote $\text{cont}(C_f,C_h)=\theta$. Since $x=0$ is transversal to $C_f$ and $C_h$, we get $\theta\geq 1$.  We will show that $\theta>\frac{\lambda}{n}$.

By hypothesis we get 
\begin{equation}\label{hypo}
	\textup{I}(C_f,C_h)>n'\cdot\left (\frac{(n_1-1)m+\lambda}{n_1}\right ).\end{equation}
Let us suppose by absurd that  $\theta\leq\frac{\lambda}{n}$. We have the following possibilites:
\begin{itemize}
	\item[a)] If $\frac{n}{n}\leq\theta<\frac{m}{n}$ then, by Proposition \ref{cont-I}, we get $\frac{\textup{I}(C_f,C_h)}{n'}=n\theta$. Since $m<\lambda$, we have that $$\hspace{2cm}\textup{I}(C_f,C_h)=n'n\theta<n'm=n'\frac{n_1m}{n_1}<n'\cdot\left ( \frac{(n_1-1)m+\lambda}{n_1}\right )$$ 
that contradicts (\ref{hypo}).

\item[b)] If $\frac{m}{n}\leq\theta\leq\frac{\lambda}{n}$ and $\lambda<\beta_2$, Proposition \ref{cont-I} give us 
$$\hspace{2cm}\textup{I}(C_f,C_h)=n'\cdot\left ( \frac{(n_1-1)m+n\theta}{n_1}\right )\leq n'\cdot\left ( \frac{(n_1-1)m+\lambda}{n_1}\right ),$$ 
that is an absurd since we have (\ref{hypo}).

\item[c)] Finally, if $\theta=\frac{\lambda}{n}$ and $\lambda=\beta_2$, by Proposition \ref{cont-I}, we get
$$\hspace{2cm}\textup{I}(C_f,C_h)=n'\cdot\left ( \frac{n_2v_2+n\theta-\beta_2}{n_1n_2}\right )= n'\cdot \frac{v_2}{n_1}=n'\cdot \left ( \frac{(n_1-1)m+\lambda}{n_1}\right ),$$ 
that it is not possible according to (\ref{hypo}).
\end{itemize}

So, $\text{cont}(C_f,C_h)=\theta>\frac{\lambda}{n}$ and the result follows by item $i)$.
\end{itemize}
\end{proof}	

\begin{ejemplo}\label{exemplo3}
Consider the plane branches $C_1\in K(3,7)$ defined by the Puiseux parametrization $(t^3,t^7+t^8)$ and $C_2\in K(6,14,17)$ given by
\begin{eqnarray*}
f_2 & = & y^6-6x^5y^4-2x^7(1+4x)y^3+9x^{10}(1-x)y^2+ \\ & + & 6x^{12}(1+x-x^2)y+x^{14}(1-x+10x^2-x^3).
\end{eqnarray*}

Once $8+3\not\in\langle 3,7\rangle$, the parametrization of $C_1$ is given as (\ref{zariski}) and, by definition, the Zariski invariant of $C_1$ is $\lambda_1=8$. 

Since 
\[\textup{I}(C_1,C_2)=\text{mult}(f_2(t^3,t^7+t^8))=45>44=6\cdot \frac{(3-1)\cdot 7+8}{3},\]
by item ii) of Proposition \ref{proposition-mais}, it follows that the Zariski invariant $\lambda_2$ of $C_2$ satisfies $\frac{\lambda_2}{6}=\frac{8}{3}$. So, we conclude that $\lambda_2=16$.
\end{ejemplo}

Up to a change of coordinates we can assume that $C_f$ is given by a Weierstrass polynomial $f=y^n+\sum_{i=1}^nc_i(x)y^{n-i}\in\mathbb{C}\{x\}[y]$ such that $$n=v_0=\textup{I}(f,x)=\min\{\textup{I}(C_f,C) \ :\ C\ \mbox{is a regular curve} \}$$ and $$m=v_1=\textup{I}(f,y)=\max\{\textup{I}(C_f,C) \ :\ C\ \mbox{is a regular curve} \}.$$

Let $C_h\in \mathcal{B}\subset K(n_1,m_1)$ be a plane branch analytically equivalent to $y^{n_1}-x^{m_1}=0$ and such that $\textup{I}(C_f,C_h)=(n_1-1)m+\lambda_f$, or equivalently $\text{cont}(C_f,C_h)=\frac{\lambda_f}{n}$. Since $C_f$ is given by a Weierstrass polynomial, we can consider $C_h$ defined by a monic polynomial $h\in\mathbb{C}\{x\}[y]$ with degree (and multiplicity) equal to $n_1=\frac{n}{e_1}$. In addition, systematically applying Euclidean division by $h$, that is, considering the $h$-expansion of $f$, we obtain $A_k\in\mathbb{C}\{x\}[y]$ with $deg_y(A_k)<n_1$ such that 
$$f=h^{e_1}+\sum_{k=0}^{e_1-1}A_kh^k\ \ \mbox{and}\ \ \textup{I}(f,h)=\textup{I}(A_0,h)=(n_1-1)m+\lambda_f.$$

Notice that the conductor $\mu_h$ of $\langle n_1,m_1\rangle$ is $\mu_h=(n_1-1)(m_1-1)<(n_1-1)m+\lambda_f$ then any integer $z\geq(n_1-1)m+\lambda_f$ belongs to $\langle n_1,m_1\rangle$. 
Thus, by Remark \ref{remark-std-expression}, there exist $p, q\in\mathbb{N}$ with $0\leq q<n_1$ such that
$$pn_1+qm_1=(n_1-1)m+\lambda_f=\textup{I}(A_0,h).$$

Since $\textup{I}(f,h)=(n_1-1)m+\lambda_f$, $\textup{I}(f,x)=n$ and $\textup{I}(f,y)=m$ it follows, by Remark \ref{triangular}, that $\textup{I}(h,x)=n_1$ and $\textup{I}(h,y)=m_1$. In this way, we get $\textup{I}(h,x^py^q)=(n_1-1)m+\lambda_f$ and consequently, there exist a unique $0\neq c\in\mathbb{C}$ and $h_1\in\mathbb{C}\{x\}[y]$ with $\text{deg}_yh_1<n_1$ such that $A_0=cx^py^q+h_1$ and $\textup{I}(h,h_1)>(n_1-1)m+\lambda_f=pn_1+qm_1$.

Similarly, we can write $h_1=\sum_{in_1+jm_1>pn_1+qm_1}a_{i,j}x^iy^j$ with $j<n_1$. We have proved the following result.

\begin{prop}\label{final-prop} Let $C_f\in K(n,m,\beta_2,\ldots ,\beta_g)$ be a plane branch defined by a Weierstrass polynomial $f\in\mathbb{C}\{x\}[y]$ with Zariski invariant $\lambda_f$, $\textup{I}(f,x)=n$ and $\textup{I}(f,y)=m$. There exist $C_h\in\mathcal{B}\subset K(n_1,m_1)$ with $\textup{I}(C_f,C_h)=(n_1-1)m+\lambda_f=pn_1+qm_1$ and a unique $0\neq c\in\mathbb{C}$ such that $$f=h^{e_1}+\sum_{k=1}^{e_1-1}A_kh^{k}+cx^py^q+\sum_{in_1+jm_1>pn_1+qm_1\atop j<n_1}a_{ij}x^iy^j,$$ where $A_k\in\mathbb{C}\{x\}[y]$ with $deg_y(A_k)<n_1$. \end{prop}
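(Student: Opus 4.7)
My plan is to follow essentially the construction already sketched in the paragraphs preceding the statement, organized as follows.

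First, I apply Theorem~\ref{geo} to produce a branch $C_h \in \mathcal{B}$ realizing the maximum intersection $(n_1-1)m+\lambda_f$ with $C_f$. A triangular inequality argument (Remark~\ref{triangular}) applied to the triples $\{C_f, C_h, \{x=0\}\}$ and $\{C_f, C_h, \{y=0\}\}$, using the hypotheses $\textup{I}(f,x)=n$ and $\textup{I}(f,y)=m$, shows that $x=0$ and $y=0$ are transverse to $C_h$; hence I may take $h\in\mathbb{C}\{x\}[y]$ to be the monic Weierstrass polynomial of $y$-degree $n_1$ defining $C_h$, and I simultaneously conclude $\textup{I}(h,x)=n_1$ and $\textup{I}(h,y)=m_1$.

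Second, I perform the $h$-adic expansion of $f$: iterated Euclidean division by $h$ in $\mathbb{C}\{x\}[y]$ yields
\[ f = h^{e_1} + \sum_{k=0}^{e_1-1} A_k h^k, \qquad A_k \in \mathbb{C}\{x\}[y],\ \deg_y A_k < n_1, \]
with leading coefficient $1$ since $\deg_y f = n = e_1 n_1$ and both $f,h$ are monic. The identity $\langle f,h\rangle = \langle A_0, h\rangle$ gives $\textup{I}(f,h) = \textup{I}(A_0,h)$, which equals $(n_1-1)m + \lambda_f$ by the choice of $C_h$. Because this value exceeds the conductor $(n_1-1)(m_1-1)$ of $\langle n_1, m_1 \rangle = \Gamma_h$, Remark~\ref{remark-std-expression} yields the unique representation $(n_1-1)m + \lambda_f = pn_1 + qm_1$ with $0 \leq q < n_1$.

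Third, I extract the term $cx^p y^q$. Writing $A_0 = \sum_{j<n_1} a_{ij} x^i y^j$, multiplicativity of the intersection multiplicity gives $\textup{I}(h, x^i y^j) = in_1 + jm_1$; by the uniqueness of standard representations (Remark~\ref{remark-std-expression}), the values $in_1 + jm_1$ over pairs $(i,j)$ with $j<n_1$ are pairwise distinct. Consequently no cancellation between distinct monomials of $A_0$ can occur at any given level, so $\textup{I}(A_0, h) = \min\{in_1 + jm_1 : a_{ij} \ne 0\}$, attained at a unique pair $(p,q)$. This supplies the unique nonzero scalar $c := a_{p,q}$ and forces every other nonzero $a_{ij} x^i y^j$ of $A_0$ to satisfy $in_1 + jm_1 > pn_1 + qm_1$, which is exactly the stated form.

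The only genuinely non-routine point I anticipate is the cancellation-free step in the third paragraph: one must rule out cancellation between different monomials sharing the same value $in_1 + jm_1$, and this is precisely where the restriction $j<n_1$ built into the $h$-adic expansion becomes essential, via the uniqueness half of the standard representation. Everything else is a direct assembly of Theorem~\ref{geo}, Remarks~\ref{remark-std-expression} and~\ref{triangular}, together with the division algorithm in $\mathbb{C}\{x\}[y]$.
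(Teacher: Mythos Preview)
Your proposal is correct and follows essentially the same approach as the paper: obtain $C_h\in\mathcal{B}$ from Theorem~\ref{geo}, use the triangular inequality (Remark~\ref{triangular}) to get $\textup{I}(h,x)=n_1$ and $\textup{I}(h,y)=m_1$, take the $h$-adic expansion of $f$, and use the conductor bound plus Remark~\ref{remark-std-expression} to extract the unique monomial $cx^py^q$ from $A_0$. Your treatment of the no-cancellation step (distinct values $in_1+jm_1$ for $j<n_1$) is slightly more explicit than the paper's, but it is the same argument.
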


\begin{ejemplo}
Let us consider the plane branch $C_2\in K(6,14,17)$ given by
\begin{eqnarray*}
f_2 & = & y^6-6x^5y^4-2x^7(1+4x)y^3+9x^{10}(1-x)y^2+ \\ & + & 6x^{12}(1+x-x^2)y+x^{14}(1-x+10x^2-x^3).
\end{eqnarray*}

By Example \ref{exemplo3}, we have that the Zariski invariant of $C_2$ is $\lambda_2=16$.

Considering the plane branch  $C_h\in \mathcal{B}\subset K(3,7)$ given by $h=y^3-x^7$ with Puiseux parametrization $(t^3,t^7)$ we get
\[	f_2(t^3,t^7)= 9t^{44}-9t^{45}+6t^{46}-9t^{47}+10t^{48}-6t^{49}-t^{51}.
\]
In this way, we obtain
\[\textup{I}(C_2,C_h)=\text{mult}(f_2(t^3,t^7))=44=(3-1)\cdot 14+16.\]

According to Proposition \ref{final-prop}, we can express
$f_2=h^{e_1}+\sum_{k=1}^{e_1-1}A_kh^{k}+cx^py^q+h_1$ where $A_k\in\mathbb{C}\{x\}[y]$ with $deg_y(A_k)<n_1$, $pn_1+qm_1=44$, $c\neq 0$ and $h_1=\sum_{(i,j)}a_{i,j}x^iy^j$ where each $(i,j)$ satisfies $in_1+jm_1>44$ and $j<n_1$. In fact, since $n_1=3$, $m_1=7$ and $e_1=2$ we get $44=10\cdot 3+2\cdot 7$, that is, $p=10$ and $q=2$, that give us
\[f_2=h^2-(6x^5y+8x^8)h+9x^{10}y^2+h_1\] where $h_1=-9x^{11}y^2+(6x^{13}-6x^{14})y-9x^{15}+10x^{16}-x^{17}.
$

Notice that the plane branch $C_h\in\mathcal{B}\in K(n_1,m_1)$ in Proposition \ref{final-prop} is not unique. In fact, let us consider $C_{h'}$ given by $h'=y^3-3x^3y^2+3x^6y-x^7-x^9$ that admits Puiseux parametrization $\varphi(t)=(t^3,t^7+t^9)$. Since the change of coordinates $\sigma(x,y)=(x,y-x^2)$ is such that $\sigma\circ\varphi(t)=(t^3,t^7)$ it follows that $C_{h'}\in \mathcal{B}\subset K(3,7)$. Moreover, we get $\textup{I}(C_2,C_{h'})=\text{mult}(f_2(t^3,t^7+t^9))=44=(3-1)\cdot 14+16=(n_1-1)m+\lambda_2$. So, applying Proposition \ref{final-prop} we obtain
$$f_2=(h')^2+(6x^3y^2+3x^6y-6x^5-26x^8+11x^9)h'+9x^{10}y^2+h'_1$$ where $h'_1=(-69x^{11}+15x^{12})y^2+(15x^{13}+66x^{14}-24x^{15})y-27x^{15}+19x^{16}-27x^{17}+10x^{18}$.
\end{ejemplo}

In particular, if $C_f\in K(n,m)$ and, considering a change of coordinates, such that the Puiseux parametrization of $C_f$ is given by (\ref{zariski}), then $e_1=1$, $h=y^n-x^m$ and Proposition \ref{final-prop} give us
$$f=y^n-x^m+cx^py^q+\sum_{in+jm>pn+qm\atop j<n}a_{ij}x^iy^j$$
that is a similar expression to the one considered by Peraire in \cite{Peraire}.

\vspace{0.75cm}

\noindent {\sc Marcelo Escudeiro Hernandes}\\
	Universidade Estadual de Maring\'a. \\
	Av. Colombo 5790. Maring\'a-PR 87020-900,
	Brazil. \\ORCID ID: 0000-0003-0441-8503
	
\noindent {mehernandes@uem.br}
	\vspace{0.3cm}

\noindent {\sc Mauro Fernando Hern\'andez Iglesias}\\
	Pontificia Universidad Cat\'olica del Per\'u. \\
	Av. Universitaria 1801, San Miguel 15088, Per\'u. \\ ORCID ID: 0000-0003-0026-157X
	
\noindent {mhernandezi@pucp.pe}

\end{document}